\newtheorem{theorem}{Theorem}
\newtheorem{proposition}[theorem]{Proposition}
\newtheorem{lemma}[theorem]{Lemma}
\let\varep=\varepsilon
\title{Existence results for the
Klein-Gordon-Maxwell equations in higher dimensions with critical
exponents}
\author{Paulo C. Carri\~{a}o\\
\footnotesize{Departamento Matem\'atica, UFMG, 30161-970 Belo
Horizonte -- MG, Brasil}\\
\footnotesize{\texttt{carrion@mat.ufmg.b}r}\\
\and
Patr\'{i}cia L. Cunha \thanks{Supported by CAPES/Brazil.}\\
\footnotesize{Departamento Matem\'atica, UFSCar, 13565-905 S\~{a}o Carlos -- SP, Brasil}\\
\footnotesize{\texttt{patcunha80@gmail.com}}\\
 \and
Olimpio H. Miyagaki\thanks{Supported in part by CPNq/Brazil and INCTMat-CNPq/Brazil.}\\
\footnotesize{Departamento Matem\'atica, Universidade Federal de
Vi\c{c}osa, 36570-000 Vi\c{c}osa -- MG, Brasil}\\
\footnotesize{\texttt{olimpio@ufv.br}} }
\begin{document}
\maketitle

\begin{abstract}
In this paper we study the existence of radially symmetric
solitary waves in $\mathbb{R}^{N}$ for the nonlinear Klein-Gordon
equations coupled with the Maxwell's equations when the
nonlinearity exhibits critical growth. The main feature of this
kind of problem is the lack of compactness arising in connection
with the use of variational methods.
\end{abstract}

\text{\footnotesize{\textit{Keywords}: Klein-Gordon-Maxwell
system; radially symmetric solution; critical growth}}

%&&&&&&&&&&&&&&&&&&&&&&&&&&&&&&&&&&&&&&&&&&&&&&&&&&&&&&&&&&&&&&&&&&&&&&&&&&&&&&&&&&&&&&&&&&&

\section{Introduction}
This article concerns the existence of solutions for the
Klein-Gordon-Maxwell $(\mathcal{KGM})$ system in $\mathbb{R}^{N}$
with critical Sobolev exponents
\begin{eqnarray}
-\Delta u+[m_{0}^{2}-(\omega +\phi)^{2}]u=\mu|u|^{q-2}u+|u|^{2^*-2}u \quad \text{in } \mathbb{R}^{N}, \label{system1} \\
 \Delta \phi=(\omega+\phi)u^{2} \quad \text{in } \mathbb{R}^{N}, \label{system2}
 \end{eqnarray}
\noindent where $2<q<2^{*}=2N/(N-2)$, $\mu>0$ ,$m_{0}>0$ and
$\omega\neq 0$ are real constants and also
$u,\phi:\mathbb{R}^{N}\rightarrow \mathbb{R}$.

Such system has been first introduced by Benci and Fortunato
\cite{Benci-Fortunato-2001} as a model which describes nonlinear
Klein-Gordon fields in three-dimensional space inte\-racting with
the eletromagnetic field. Further, in the quoted paper
\cite{Benci-Fortunato} they proved existence of solitary waves of
the couplement Klein-Gordon-Maxwell equations when the
nonlinearity has subcritical behavior.

Some recent works have treated this problem still in the
subcritical case and we cite a couple of them.

D'Aprile and Mugnai \cite{D'Aprile-Mugnai} established the
existence of infinitely many radially symmetric solutions for the
subcritical $(\mathcal{KGM})$ system in $\mathbb{R}^{3}$. They
extended the interval of definition of the power in the
nonlinearity exhibited in \cite{Benci-Fortunato}.  For related
works, see \cite{Georgiev-Visciglia} and \cite{Mugnai}.

Non-existence results and a treatment of the $(\mathcal{KGM})$
system in bounded domains can be found in (\cite{Cassani},
\cite{D'Aprile-Mugnai-nonexistece},
\cite{d'Avenia-Pisani-Siciliano},
\cite{d'Avenia-Pisani-Siciliano2} and references therein).

With this \textit{Ansatz} Cassani \cite{Cassani} proved the
existence of nontrivial radially symmetric solutions in
$\mathbb{R}^{3}$ for the critical case. He was able to show that
\begin{itemize}
 \item if $|m_{0}|>|\omega|$ and $4<q<2^{*}$, then for each $\mu>0$ there exists at least a radially symmetric
 solution for system (\ref{system1})-(\ref{system2}).
 \item if $|m_{0}|>|\omega|$ and $q=4$, then system (\ref{system1})-(\ref{system2}) also has at least a radially
 symmetric solution by supposing $\mu $ sufficiently large.
\end{itemize}

The goal of this paper is to complement Theorem 1.2 from Cassani
in \cite{Cassani} and also extend it in higher dimensions as
follows

\begin{theorem}\label{principal}
Assume either $|m_{0}|>|\omega|$ and $4\leq q<2^{*}$ or
$|m_{0}|\sqrt{q-2}>|\omega|\sqrt{2}$ and $2<q<4$.

Then system (\ref{system1})-(\ref{system2}) has at least one
radially symmetric (nontrivial) solution $(u,\phi)$ with $ u \in
H^{1}(\mathbb{R}^{N})$ and $\phi\in
\mathcal{D}^{1,2}(\mathbb{R}^{N})$ provided that
\begin{itemize}
    \item [i)]$N=4$ and $N\geq 6$ for $2<q<2^*$ if $\mu>0$;
    \item [ii)] $N=5$ and either $2<q <\frac{8}{3}$ if $\mu>0$ or $\frac{8}{3}\leq q
    <2^*$ if $\mu$ is sufficiently large;
    \item [iii)] $N=3$ and either $4<q<2^*$ if $\mu>0$ or $2<q\leq
    4$ if $\mu$ is sufficiently large.
\end{itemize}
\end{theorem}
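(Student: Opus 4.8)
The plan is to reduce the system to a single nonlocal equation, run a mountain pass scheme in the space of radial functions, and overcome the loss of compactness caused by the critical exponent through a sharp estimate of the minimax level against the critical Sobolev threshold $\tfrac1N S^{N/2}$. First I would work in $H^1_r(\mathbb R^N)\times\mathcal D^{1,2}_r(\mathbb R^N)$; by the principle of symmetric criticality radial critical points are genuine solutions. For $u\in H^1_r(\mathbb R^N)$ fixed, equation (\ref{system2}), written as $-\Delta\phi+u^2\phi=-\omega u^2$, has a unique solution $\phi_u\in\mathcal D^{1,2}_r(\mathbb R^N)$ by Lax--Milgram, and one records the by now standard properties (Benci--Fortunato, D'Aprile--Mugnai): the map $u\mapsto\phi_u$ is $C^1$, $-|\omega|\le\phi_u\le0$ when $\omega>0$ (mirror bounds when $\omega<0$, so we may assume $\omega>0$), and testing (\ref{system2}) by $\phi_u$ yields $\|\nabla\phi_u\|_2^2=-\int_{\mathbb R^N}(\omega\phi_u+\phi_u^2)u^2$. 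Substituting $\phi=\phi_u$ in the energy of the system gives the $C^1$ functional
\[
I(u)=\tfrac12\|\nabla u\|_2^2+\tfrac12\!\int_{\mathbb R^N}\!(m_0^2-\omega^2)u^2-\tfrac12\!\int_{\mathbb R^N}\!\omega\phi_u u^2-\tfrac{\mu}{q}\|u\|_q^q-\tfrac{1}{2^*}\|u\|_{2^*}^{2^*},
\]
whose critical points $u$ produce solutions $(u,\phi_u)$ of (\ref{system1})--(\ref{system2}). Since $-\omega\int\phi_u u^2=\|\nabla\phi_u\|_2^2+\int\phi_u^2u^2\ge0$ and $m_0^2>\omega^2$ holds in both clusters of hypotheses (for $2<q<4$ it follows from $m_0^2(q-2)>2\omega^2$), $I$ has the mountain pass geometry: $I(0)=0$, $I\ge\rho>0$ on a small sphere of $H^1_r(\mathbb R^N)$ by Sobolev, and $I(tv)\to-\infty$ as $t\to+\infty$ for every fixed $v\ne0$ (using $|\omega\int\phi_{tv}(tv)^2|\le\omega^2t^2\|v\|_2^2$). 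Let $c>0$ be the resulting mountain pass level.

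The crux is to show $c<\tfrac1N S^{N/2}$, where $S\|u\|_{2^*}^2\le\|\nabla u\|_2^2$ defines the best Sobolev constant. I would insert into $I$ the truncated Aubin--Talenti instantons $u_\varepsilon=\psi\,U_\varepsilon$, $U_\varepsilon(x)=c_N\varepsilon^{(N-2)/2}(\varepsilon^2+|x|^2)^{-(N-2)/2}$, $\psi$ a fixed radial cutoff, and estimate $\max_{t\ge0}I(tu_\varepsilon)$. The gradient and critical terms reproduce $\tfrac1N S^{N/2}$ up to a concentration error $O(\varepsilon^{N-2})$; the positive mass term $(m_0^2-\omega^2)\|u_\varepsilon\|_2^2$ and the (nonnegative) nonlocal term raise the level by amounts whose $\varepsilon$-order is fixed by the classical Brezis--Nirenberg estimates, namely $\varepsilon^2$ for $N\ge5$, $\varepsilon^2|\log\varepsilon|$ for $N=4$, $\varepsilon$ for $N=3$, the nonlocal piece being controlled through $0\ge\phi_{u_\varepsilon}\ge-|\omega|$; and the subcritical term lowers it by a quantity of order $\|u_\varepsilon\|_q^q\sim\varepsilon^{\,N-q(N-2)/2}$ when $q>2^*/2$ (and $\varepsilon^{\,q(N-2)/2}$ when $q<2^*/2$, with a logarithmic factor at $q=2^*/2$). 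Matching these $\varepsilon$-orders, with $\mu$ as a free parameter, is precisely what singles out the admissible triples in Theorem~\ref{principal}: for $N=4$ and $N\ge6$ the subcritical gain beats every loss for all $q\in(2,2^*)$ and all $\mu>0$; for $N=5$ it does so with $\mu$ arbitrary exactly on $2<q<8/3$, while $8/3\le q<2^*$ forces $\mu$ large; and for $N=3$ the concentration loss $\varepsilon$ is overcome only for $q>4$, with $2<q\le4$ again requiring $\mu$ large. In each admissible case one thus gets $0<c<\tfrac1N S^{N/2}$.

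With this in hand I would verify that $I$ satisfies the Palais--Smale condition below $\tfrac1N S^{N/2}$. Given $(u_n)\subset H^1_r(\mathbb R^N)$ with $I(u_n)\to c$ and $I'(u_n)\to0$, boundedness follows from the combination $I(u_n)-\tfrac1\theta\langle I'(u_n),u_n\rangle$: for $q\ge4$ take $\theta=q$ and use $-\omega\int\phi_{u_n}u_n^2\ge0$; for $2<q<4$ the hypothesis $m_0^2(q-2)>2\omega^2$ is exactly what keeps the quadratic part coercive after the badly signed portion of the nonlocal term is absorbed. Passing to a subsequence, $u_n\rightharpoonup u$ in $H^1_r(\mathbb R^N)$; the compact embedding $H^1_r(\mathbb R^N)\hookrightarrow L^q(\mathbb R^N)$ for $2<q<2^*$ handles the subcritical term and $\phi_{u_n}\rightharpoonup\phi_u$ handles the nonlocal term, so $u$ solves the reduced equation. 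Setting $v_n=u_n-u$, the Brezis--Lieb lemma gives $\|\nabla v_n\|_2^2=\|v_n\|_{2^*}^{2^*}+o(1)=:\ell+o(1)$, and a concentration-compactness argument shows that the only way compactness can fail below $\tfrac1N S^{N/2}$ is through the splitting off of a Sobolev bubble carrying $\ell\ge S^{N/2}$, which would force $c\ge\tfrac1N S^{N/2}$ and contradict the level estimate. Hence $\ell=0$, $u_n\to u$ in $H^1_r(\mathbb R^N)$, and $u\ne0$ because $c>0$; the mountain pass theorem then supplies the critical point $u$, and $(u,\phi_u)$ is the desired radially symmetric solution, completing the proof of Theorem~\ref{principal}. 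The principal obstacle is the level estimate: balancing, order by order in $\varepsilon$, the subcritical gain against the concentration error, the mass term and the nonlocal term, which is what produces the precise ranges of $N$, $q$ and $\mu$; the secondary difficulty is the boundedness of Palais--Smale sequences when $2<q<4$, the regime in which the condition $m_0^2(q-2)>2\omega^2$ becomes indispensable.
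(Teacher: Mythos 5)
Your proposal follows the paper's strategy in broad strokes (reduction via $\Phi[u]$, radial restriction, mountain pass, bounding the level below $\tfrac{1}{N}S^{N/2}$ via truncated Aubin--Talenti instantons, and then ruling out vanishing), so in that sense the routes coincide. Two places differ in detail. First, for compactness the paper does not prove a full Palais--Smale condition: it passes to a weakly convergent subsequence, shows the weak limit is already a solution, and then argues directly that the limit cannot be zero because otherwise the Sobolev quotient would force $c\ge\tfrac1N S^{N/2}$. Your Brezis--Lieb plus concentration--compactness route is a standard variant that reaches the same endpoint, but the intermediate identity $\|\nabla v_n\|_2^2=\|v_n\|_{2^*}^{2^*}+o(1)$ is not automatic from Brezis--Lieb alone; you would need to test $J'(u_n)\to0$ and $J'(u)=0$ against $u_n$ and $u$ and discard the nonlocal cross terms. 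This is a gap of exposition rather than of substance.

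The second difference is more consequential and worth flagging. You bound the nonlocal piece by the pointwise estimate $0\ge\Phi[u]\ge-|\omega|$, giving $-\tfrac{\omega}{2}\int\Phi[u]u^2\le\tfrac{\omega^2}{2}\|u\|_2^2$, and you state that this term is then of the same order as the $L^2$ mass term ($\varepsilon^2$ in your normalisation for $N\ge5$). If you actually carry this out, the $+\tfrac{\omega^2}{2}t^2\|v_\varepsilon\|_2^2$ from the nonlocal bound cancels the $-\tfrac{\omega^2}{2}t^2\|v_\varepsilon\|_2^2$ already present in $J$, and you are left with a clean Br\'ezis--Nirenberg comparison between $m_0^2\|v_\varepsilon\|_2^2$ (order $\varepsilon^2$ for $N\ge5$) and $\mu\|v_\varepsilon\|_q^q$ (order $\varepsilon^{N-q(N-2)/2}$, which for $N\ge5$ beats $\varepsilon^2$ whenever $q>2$). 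That would give $c<\tfrac1N S^{N/2}$ for every $q\in(2,2^*)$ and every $\mu>0$ as soon as $N\ge4$, which is stronger than the statement you are trying to prove. Your claimed case split (for $N=5$, ``exactly on $2<q<8/3$'' with $\mu$ arbitrary) therefore does not follow from the orders you write down; you have simply transcribed the theorem's hypotheses rather than derived them. The paper instead estimates the nonlocal piece by $C\|v_\varepsilon\|_{2\cdot2^*/(2^*-1)}^4$ (a Sobolev/H\"older bound, not the $L^\infty$ bound), and it is that coarser estimate which produces the threshold $q=8/3$ when $N=5$; whichever bound you adopt, you should run the order comparison honestly in each dimension and check that it actually reproduces the claimed ranges, since as written the key balancing step is asserted but not verified.
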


In order to get this result we will explore the Br\'{e}zis and
Nirenberg technique and some of its variants. See e.g.
\cite{Olimpio}.

%&&&&&&&&&&&&&&&&&&&&&&&&&&&&&&&&&&&&&&&&&&&&&&&&&&&&&&&&&&&&&&&&&&&&&&&&&&&&&&&&&&&&&&&&&&&

\section{Preliminary Results}

We want to find solutions of the system
(\ref{system1})-(\ref{system2}) where $u\in H^{1}(\mathbb{R}^{N})$
and $\phi\in \mathcal{D}^{1,2}(\mathbb{R}^{N})$.

Here $H^{1}\equiv H^{1}(\mathbb{R}^{N})$ denotes the usual Sobolev
space endowed with the norm
\begin{eqnarray} \|u\|^{2}=\int_{\mathbb{R}^{N}}(|\nabla u|^2+u^2)dx
\end{eqnarray}
\noindent and $\mathcal{D}^{1,2}\equiv
\mathcal{D}^{1,2}(\mathbb{R}^{N})$ denotes the completion of
$\mathcal{C}_{0}^{\infty}(\mathbb{R}^{N})$ with respect to the
norm
\begin{eqnarray} \|u\|_{\mathcal{D}^{1,2}}^{2}=\int_{\mathbb{R}^{N}}|\nabla u|^2dx.
\end{eqnarray}

The $(\mathcal{KGM})$ system are the Euler-Lagrange equations
related to the functional
$$F:H^{1}\times \mathcal{D}^{1,2}
\rightarrow \mathbb{R}$$ defined as
\begin{eqnarray}\label{F} && F(u,\phi) =\frac{1}{2}\int_{\mathbb{R}^{N}}(|\nabla u|^2-|\nabla \phi|^2+[m_{0}^2-(\omega+\phi)^2 ]u^2)dx+\nonumber\\
&&\hspace{1.5cm}-\frac{\mu}{q}\int_{\mathbb{R}^{N}}|u|^q
dx-\frac{1}{2^{*}}\int_{\mathbb{R}^{N}}|u|^{2^{*}} dx,
\end{eqnarray}
\noindent which by standard arguments is $C^{1}$ on $H^{1}\times
\mathcal{D}^{1,2}$.

The functional $F$ is strongly indefinite. To avoid this
difficulty, we reduce the study of (\ref{F}) to the study of a
functional in the only variable $u$, as it has been done by the
aforementioned authors.

Now we need some technical results.

\begin{proposition}\label{Propriedade-phi} For every $u \in H^{1}$, there exists an unique
$\phi=\Phi[u]\in \mathcal{D}^{1,2}$ which solves (\ref{system2}).
Furthermore, in the set $\{x|u(x)\neq 0\}$ we have
$-\omega\leq\Phi[u]\leq 0$ if $\omega>0$ and
 $0\leq\Phi[u]\leq-\omega$ if $\omega<0$.
\end{proposition}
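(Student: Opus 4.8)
The plan is to obtain $\Phi[u]$ by a direct variational argument applied to the second equation alone. For fixed $u \in H^1(\mathbb{R}^N)$, consider the functional
\[
E_u(\phi) = \frac{1}{2}\int_{\mathbb{R}^N} |\nabla \phi|^2\, dx + \frac{1}{2}\int_{\mathbb{R}^N} u^2 \phi^2\, dx + \omega \int_{\mathbb{R}^N} u^2 \phi\, dx
\]
on $\mathcal{D}^{1,2}(\mathbb{R}^N)$. First I would check that $E_u$ is well defined: by Sobolev, $u^2 \in L^{N/(N-2)}$ is the dual exponent so that $\int u^2 \phi^2$ and $\int u^2 \phi$ make sense via Hölder against $\phi, \phi^2 \in L^{2^*}$. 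Then $E_u$ is continuous, strictly convex (the quadratic part $\int |\nabla \phi|^2 + \int u^2 \phi^2$ is nonnegative and $\int |\nabla \phi|^2$ alone controls the $\mathcal{D}^{1,2}$-norm, while the linear term is bounded), and coercive, since $E_u(\phi) \geq \tfrac12 \|\phi\|_{\mathcal D^{1,2}}^2 - C\|u^2\|_{2N/(N+2)} \|\phi\|_{\mathcal D^{1,2}}$. Hence $E_u$ has a unique minimizer $\Phi[u]$, which is the unique critical point by strict convexity, and it solves $-\Delta \phi + u^2\phi = -\omega u^2$, i.e. equation (\ref{system2}), weakly; the uniqueness claim follows because any two solutions have difference $\psi$ satisfying $\int |\nabla\psi|^2 + \int u^2\psi^2 = 0$.

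Next I would establish the pointwise bounds by a truncation/testing argument. Suppose $\omega > 0$; I want $-\omega \leq \Phi[u] \leq 0$ wherever $u \neq 0$. For the upper bound, use $\phi^+ = \max\{\Phi[u], 0\}$ as a test function in the weak formulation: this gives $\int |\nabla \phi^+|^2 + \int u^2 (\phi^+)^2 = -\omega \int u^2 \phi^+ \leq 0$, forcing $\phi^+ \equiv 0$ on $\{u \neq 0\}$, hence $\Phi[u] \leq 0$ there (and indeed $\nabla\phi^+ \equiv 0$ so $\Phi[u]\le 0$ a.e.). For the lower bound, test against $(\Phi[u] + \omega)^- = \max\{-(\Phi[u]+\omega), 0\}$, i.e. the negative part of $\Phi[u] + \omega$; using $-\Delta\Phi[u] = -u^2(\omega + \Phi[u])$ one gets $\int |\nabla(\Phi[u]+\omega)^-|^2 + \int u^2 \big((\Phi[u]+\omega)^-\big)^2 \leq 0$ on $\{u\neq 0\}$, whence $(\Phi[u]+\omega)^- \equiv 0$ there, i.e. $\Phi[u] \geq -\omega$. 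The case $\omega < 0$ is symmetric, applying the same argument to $-\Phi[u]$ (or replacing $\omega$ by $-\omega$).

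The main technical obstacle I anticipate is making the truncation argument rigorous in the unbounded domain $\mathbb{R}^N$: one must verify that $\phi^+$ and $(\Phi[u]+\omega)^-$ actually belong to $\mathcal{D}^{1,2}(\mathbb{R}^N)$ (for the latter, $\Phi[u] + \omega \notin \mathcal D^{1,2}$ since the constant $\omega$ is not, but its negative part is — one should check it has finite Dirichlet energy and lies in $L^{2^*}$, which follows from $0 \le (\Phi[u]+\omega)^- \le |\Phi[u]|$ and $\nabla(\Phi[u]+\omega)^- = -\chi_{\{\Phi[u]+\omega<0\}}\nabla\Phi[u]$), and that they are legitimate test functions in the weak formulation. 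A careful density/approximation argument, or invoking the standard fact that $\mathcal D^{1,2}$ is stable under such truncations (Stampacchia-type lemma), handles this. The rest is routine.
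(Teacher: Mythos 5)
Your proof is correct in substance and, for the pointwise bounds, is essentially the paper's own argument: the authors also test equation (\ref{system2}) with the truncation $(\omega+\Phi[u])^{-}$ (resp.\ $(\omega+\Phi[u])^{+}$ when $\omega<0$) to obtain $\Phi[u]\geq-\omega$ on $\{u\neq0\}$, while for the sign condition $\Phi[u]\leq0$ they merely invoke Stampacchia's lemma with a reference; your direct test with $\Phi[u]^{+}$ replaces that citation by a self-contained one-line argument, which is a small improvement, and your remarks on why $(\Phi[u]+\omega)^{-}$ lies in $\mathcal{D}^{1,2}$ address a point the paper passes over silently. For existence and uniqueness the paper only cites the three-dimensional proof of Benci--Fortunato, whereas you sketch the convex-minimization argument; this is the right idea, but your integrability bookkeeping fails in higher dimensions: for $\int u^{2}\phi^{2}$ with $\phi^{2}\in L^{2^{*}/2}$ the dual exponent is $N/2$, not $N/(N-2)$, and $u\in H^{1}$ gives $u^{2}\in L^{N/2}$ only for $N\leq4$; likewise $\int u^{2}|\phi|$ requires $u\in L^{4N/(N+2)}$, guaranteed only for $N\leq6$, while the theorem covers all $N\geq3$. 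So $E_{u}$ is in general neither finite nor continuous on all of $\mathcal{D}^{1,2}$. The repair is routine: since $\tfrac12 u^{2}\phi^{2}+\omega u^{2}\phi\geq-\tfrac{\omega^{2}}{2}u^{2}$ pointwise, $E_{u}$ is a well-defined, coercive, convex, weakly lower semicontinuous functional with values in $(-\infty,+\infty]$, and the direct method (or Lax--Milgram on the Hilbert space $\mathcal{D}^{1,2}\cap L^{2}(u^{2}\,dx)$, where the linear form is bounded via $|\omega\int u^{2}\phi\,dx|\leq|\omega|\,\|u\|_{2}\big(\int u^{2}\phi^{2}\,dx\big)^{1/2}$) produces the unique solution; with that adjustment the rest of your argument, including the admissibility of the truncated test functions, goes through as you indicate.
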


\begin{proof} The proof of the uniqueness of $\Phi[u]\in \mathcal{D}^{1,2}(\mathbb{R}^N)$ is very similar to the one proved in dimension three by \cite{Benci-Fortunato}.

Following the same idea of \cite{D'Aprile-Mugnai}, fix $u\in
H^{1}$ and consider $\omega >0$. If we multiply (\ref{system2}) by
$(\omega+\Phi[u])^{-}=\text{min}\{\omega+\Phi[u],0\}$, which is an
admissible test function, we get
\begin{eqnarray*}
-\int\limits_{\{x|\omega+\Phi[u]<0\}}{|\nabla\Phi[u]|^{2}}-\int\limits_{\{x|\omega+\Phi[u]<0\}}
{(\omega+\Phi[u])^{2} u^{2}}= 0
\end{eqnarray*}

\noindent so that $\Phi[u]\geq -\omega$ where $u\neq 0$.
Otherwise, if $\omega<0$ and multiplying (\ref{system2}) by
$(\omega+\Phi[u])^{+}=\text{max}\{\omega+\Phi[u],0\}$ and
repeating the same argument, we obtain $\Phi[u]\leq -\omega$, for
$u\neq 0$.

Finally observe that by Stampacchia's lemma, if $\omega>0$ then
$\phi\leq 0$, and if $\omega<0$, $\phi\geq 0$ (for details see
\cite{Cassani} or \cite{Cunha}).

\end{proof}

In view of Proposition \ref{Propriedade-phi}, we can define
\begin{eqnarray*} \Phi :H^{1}\rightarrow \mathcal{D}^{1,2}
\end{eqnarray*}
\noindent which is of class $C^1$ (see
\cite{D'Aprile-Mugnai-nonexistece}) and maps each $u\in H^{1}$ in
the unique solution of (\ref{system2}), then
\begin{eqnarray}\label{system2-modificado}-\Delta \Phi[u] + u^{2} \Phi[u] = -\omega u^{2}.
\end{eqnarray}

Taking the product of (\ref{system2-modificado}) with $\Phi[u]$
and integrating by parts, we obtain
\begin{eqnarray}\label{system2-modificado2}\int_{\mathbb{R}^{N}}{| \nabla\Phi[u]|^{2}}dx = -\int_{\mathbb{R}^{N}}{\omega
u^{2}\Phi[u]}dx-\int_{\mathbb{R}^{N}}{u^{2}\Phi[u]^{2}}dx.
\end{eqnarray}

Now consider the functional
\begin{eqnarray*}
 J: H^{1}\rightarrow \mathbb{R}, \hspace{0.7cm} J(u) := F(u,\Phi[u])
 \end{eqnarray*}
which is also of class $C^{1}$.

By the definition of $F$ and using (\ref{system2-modificado2}),
$J$ can be written in the following form
\begin{eqnarray} \label{J}
 J(u)=&&\hspace{-0.5cm}\frac{1}{2}\int_{\mathbb{R}^{N}}{\Big(|\nabla u|^{2}+ (m_{0}^{2}-\omega^{2})u^{2}+|\nabla\Phi[u]|^{2}+
\Phi[u]^{2}u^{2}\Big)dx}
\hspace{0.1cm} + \nonumber\\
&& \hspace{-0.5cm}-\frac{\mu}{q}\int_{\mathbb{R}^{N}}{|u|^{q}}dx -
\frac{1}{2^ {*}}\int_{\mathbb{R}^{N}}{|u|^{2^{*}}}dx,
\end{eqnarray}
\noindent while for $J'$ we have, $\forall v\in H^{1}$,
\begin{eqnarray}\label{J'}
\hspace{-1cm}\langle J'(u),v\rangle =\nonumber\\
&&\hspace{-2.5cm}= \int_{\mathbb{R}^{N}}{\Big(\nabla u\cdot \nabla
v+ [m_{0}^{2}-(\omega+\Phi[u])^{2}]uv- \mu |u|^{q-2}uv-|u|^{2^
 {*}-2}uv\Big)}dx\hspace{0.1cm}.
\end{eqnarray}

\begin{proposition} Let $(u,\phi)\in H^{1}\times \mathcal{D}^{1,2}$. Then
the following statements are equi\-valent:
\begin{itemize}
    \item [i)]$(u,\phi)$ is a critical point for $F$;
    \item [ii)]$u$ is a critical point for $J$ and $\phi=\Phi[u]$.
\end{itemize}
\end{proposition}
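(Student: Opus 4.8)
The plan is to exploit the fact that $F$ is $C^1$ on $H^1 \times \mathcal{D}^{1,2}$ and that, by construction, $\Phi[u]$ is characterized as the unique element of $\mathcal{D}^{1,2}$ for which the partial derivative $\partial_\phi F(u,\cdot)$ vanishes at it. Indeed, differentiating (\ref{F}) in the $\phi$-variable gives, for every $\psi \in \mathcal{D}^{1,2}$,
\begin{eqnarray*}
\langle \partial_\phi F(u,\phi), \psi\rangle = \int_{\mathbb{R}^{N}}\big(-\nabla\phi\cdot\nabla\psi - (\omega+\phi)u^{2}\psi\big)dx,
\end{eqnarray*}
and this expression vanishes for all $\psi$ precisely when $\phi$ is a weak solution of (\ref{system2}); by Proposition \ref{Propriedade-phi} this happens if and only if $\phi = \Phi[u]$.

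First I would prove i) $\Rightarrow$ ii). If $(u,\phi)$ is a critical point of $F$, both partial derivatives vanish at $(u,\phi)$; from $\partial_\phi F(u,\phi) = 0$ and the characterization above we get $\phi = \Phi[u]$. Since $\Phi : H^1 \to \mathcal{D}^{1,2}$ is of class $C^1$ (see \cite{D'Aprile-Mugnai-nonexistece}) and $J(u) = F(u,\Phi[u])$, the chain rule yields, for every $v \in H^1$,
\begin{eqnarray*}
\langle J'(u), v\rangle = \langle \partial_u F(u,\Phi[u]), v\rangle + \langle \partial_\phi F(u,\Phi[u]), \Phi'[u]v\rangle.
\end{eqnarray*}
The second term is zero because $\partial_\phi F(u,\Phi[u]) = 0$, and the first is zero because $(u,\phi) = (u,\Phi[u])$ is a critical point of $F$. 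Hence $J'(u) = 0$, which together with $\phi = \Phi[u]$ gives ii).

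Conversely, for ii) $\Rightarrow$ i), suppose $J'(u) = 0$ and $\phi = \Phi[u]$. The same chain-rule identity, together with $\partial_\phi F(u,\Phi[u]) = 0$, gives $\langle \partial_u F(u,\Phi[u]), v\rangle = \langle J'(u), v\rangle = 0$ for all $v \in H^1$, so $\partial_u F(u,\phi) = 0$. Combined with $\partial_\phi F(u,\phi) = \partial_\phi F(u,\Phi[u]) = 0$, this shows that $(u,\phi)$ is a critical point of $F$. I do not expect a genuine obstacle here: the only point needing care is the justification of the chain rule, namely the $C^1$-regularity of $\Phi$ and the continuity of the bi- and tri-linear forms appearing in $F$, which follow from the Sobolev embeddings $H^1(\mathbb{R}^N)\hookrightarrow L^{2^*}(\mathbb{R}^N)$ and $\mathcal{D}^{1,2}(\mathbb{R}^N)\hookrightarrow L^{2^*}(\mathbb{R}^N)$; once these are in place the argument is a direct computation.
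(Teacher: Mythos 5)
Your argument is correct and is the standard reduction argument; the paper itself does not write it out but simply cites \cite{Benci-Fortunato}, where the same chain-rule reasoning appears. The key point you correctly identify is that $\Phi[u]$ is precisely the zero of $\partial_\phi F(u,\cdot)$ (which is the weak form of (\ref{system2})), so that by the $C^1$-regularity of $\Phi$ the chain rule collapses to $J'(u)=\partial_u F(u,\Phi[u])$, and both implications follow immediately.
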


\begin{proof} See \cite{Benci-Fortunato}.
\end{proof}

Hence, we look for critical points of $J$.

%&&&&&&&&&&&&&&&&&&&&&&&&&&&&&&&&&&&&&&&&&&&&&&&&&&&&&&&&&&&&&&&&&&&&&&&&&&&&&&&&&&&&&&&&&&&

\section{Proof of the Main Result}

In order to overcame the lack of compactness due to the invariance
under group of translations of $J$, we restrict ourselves to
radial functions. More precisely, we look at the functional $J$
on the subspace
\begin{eqnarray*}H_{r}^1 = \{u\in H^1 : u(x)=u(|x|)\}
\end{eqnarray*}
compactly embedded into $L_{r}^p $, $2<p<2^*$, where
$L_{r}^p=\{u\in L^p : u(x)=u(|x|)\} $.

We also point out that any critical point $u\in H_{r}^1$ of
$J|_{H_{r}^1}$ is also a critical point of $J$ by the Principle of
symmetric criticality of Palais (see \cite{Willem}).

Now we show that the functional $J$ verifies the Mountain-Pass
Geometry, more exactly $J$ satisfies the following lemma
\begin{lemma} The functional $J$ satisfies
 \begin{itemize}
 \item [(i)] There exist positive constants $\alpha,\rho$ such that $J(u)\geq\alpha$ for
 $\|u\|=\rho$.
 \item [(ii)] There exists $u_{1}\in H_{r}^1(\mathbb{R}^N)$ with $\|u_{1}\|>\rho$ such that
 $J(u_{1})< 0$.
\end{itemize}
\end{lemma}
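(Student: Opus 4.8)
The plan is to verify the standard Mountain-Pass geometry for $J$ restricted to $H^1_r$, using the sign and boundedness properties of $\Phi[u]$ from Proposition~\ref{Propriedade-phi} to control the troublesome term $\int (\omega+\Phi[u])^2 u^2$.

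\medskip

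\noindent\textbf{Part (i).}
First I would rewrite $J(u)$ in the convenient form \eqref{J}, in which the quadratic part is
$\tfrac12\int(|\nabla u|^2+(m_0^2-\omega^2)u^2)\,dx$ plus the manifestly nonnegative terms
$\tfrac12\int(|\nabla\Phi[u]|^2+\Phi[u]^2u^2)\,dx$. The key point is that the hypothesis $|m_0|>|\omega|$ (and likewise $|m_0|\sqrt{q-2}>|\omega|\sqrt2$, which in particular forces $m_0^2>\omega^2$ when $q<4$) guarantees that $m_0^2-\omega^2>0$, so this quadratic part is bounded below by $c\|u\|^2$ for some $c>0$. Discarding the nonnegative $\Phi$-terms we get $J(u)\ge c\|u\|^2-\tfrac{\mu}{q}\|u\|_q^q-\tfrac{1}{2^*}\|u\|_{2^*}^{2^*}$. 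Then I would invoke the Sobolev embeddings $H^1_r\hookrightarrow L^q$ and $H^1_r\hookrightarrow L^{2^*}$ to bound $\|u\|_q^q\le C_q\|u\|^q$ and $\|u\|_{2^*}^{2^*}\le C_{2^*}\|u\|^{2^*}$, giving
$J(u)\ge c\|u\|^2-C_q'\|u\|^q-C_{2^*}'\|u\|^{2^*}$.
Since $q>2$ and $2^*>2$, the right-hand side is a function of $t=\|u\|$ of the form $ct^2-at^q-bt^{2^*}$, which is strictly positive for $t=\rho$ small enough; choosing such $\rho$ and setting $\alpha$ equal to that positive value proves (i).

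\medskip

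\noindent\textbf{Part (ii).}
For the second condition I would fix any nonzero $v\in H^1_r$ (for instance a smooth compactly supported radial bump) and examine the behaviour of $t\mapsto J(tv)$ as $t\to+\infty$. The leading term is $-\tfrac{1}{2^*}t^{2^*}\int|v|^{2^*}\,dx$, which dominates the quadratic term $\tfrac12 t^2\int(|\nabla v|^2+(m_0^2-\omega^2)v^2)\,dx$ and the term $-\tfrac{\mu}{q}t^q\int|v|^q\,dx$ since $2^*>q>2$. The only term requiring a small argument is $\tfrac12\int(|\nabla\Phi[tv]|^2+\Phi[tv]^2(tv)^2)\,dx$: using \eqref{system2-modificado2} together with the bound $-\omega\le\Phi[tv]\le 0$ (resp. $0\le\Phi[tv]\le-\omega$) from Proposition~\ref{Propriedade-phi} on $\{tv\ne0\}$, one controls this term by $|\omega|\int|\Phi[tv]|(tv)^2\,dx\le\omega^2 t^2\int v^2\,dx$, so it grows at most like $t^2$ and is absorbed by the dominant negative term. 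Hence $J(tv)\to-\infty$ as $t\to\infty$, and picking $t_1$ large enough so that $\|t_1v\|>\rho$ and $J(t_1v)<0$, we take $u_1=t_1v$.

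\medskip

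\noindent The main obstacle, such as it is, lies not in either step individually but in being careful with the $\Phi[u]$-dependent terms: one must make sure they are genuinely nonnegative (for (i)) and grow no faster than quadratically (for (ii)). Both facts follow from Proposition~\ref{Propriedade-phi} and identity \eqref{system2-modificado2}, so once those are in hand the argument is a routine adaptation of the classical Brezis--Nirenberg mountain-pass setup; the critical exponent plays no special role in this lemma and will only become the real difficulty later, when verifying a Palais--Smale condition below the critical level.
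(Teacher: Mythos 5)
Your proposal is correct and follows essentially the same route as the paper: for (i) you discard the nonnegative $\Phi[u]$-terms in the rewritten functional \eqref{J}, use $m_0^2>\omega^2$ (which indeed follows from either hypothesis when $q<4$) for coercivity of the quadratic part, and apply the Sobolev embeddings; for (ii) you use the bound $-\omega\Phi[u]\le\omega^2$ from Proposition~\ref{Propriedade-phi} together with identity \eqref{system2-modificado2} to show that the $\Phi$-contribution grows at most like $t^2$, whence $J(tv)\to-\infty$. These are precisely the estimates in the paper's proof.
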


\begin{proof} Using the Sobolev embeddings, we have
\begin{eqnarray*}
 J(u)\geq C_{1}\|u\|^2-C_{2}\|u\|^q-C_{3}\|u\|^{2^*},
\end{eqnarray*}
where $C_{1}$, $C_{2}$ and $C_{3}$ are positive constants. Since
$q>2$, there exists $\alpha,\rho >0$ such that
$\inf\limits_{\|u\|=\rho}J(u)>\alpha$, showing $(i)$.

Let $u\in H_{r}^1$, then for $t\geq 0$
\begin{eqnarray}&&J(tu) = \frac{t^2}{2}\int_{\mathbb{R}^N}\Big( |\nabla u|^2+(m_{0}^2-\omega^2)u^2 \Big)dx+ \frac{1}{2}\int_{\mathbb{R}^N}\Big( |\nabla \Phi[tu]|^2+\Phi[tu]^2(tu)^2 \Big)dx +\nonumber\\
&&\hspace{1.5cm}-\frac{\mu}{q}t^q\int_{\mathbb{R}^N}|u|^q dx
-\frac{t^{2^*}}{2^*}\int_{\mathbb{R}^N}|u|^{2^{*}} dx.
\end{eqnarray}

By Proposition (\ref{Propriedade-phi}) we get the estimate
\footnote{From now on we use $\Phi$ ($\Phi_{n}$) instead of
$\Phi[u]$ ($\Phi[u_{n}]$).}
\begin{eqnarray*}
-\int_{\mathbb{R}^{N}}{\omega u^{2}\Phi[u]}dx \leq
\int_{\mathbb{R}^{N}}{\omega ^2 u^2}dx,
\end{eqnarray*}
then using equation (\ref{system2-modificado2}) and the last
inequality in (\ref{J}), we obtain
\begin{eqnarray*}
 J(tu)\leq C_{4} t^2\|u\|^2 +\frac{\omega^2}{2}t^2\|u\|_{2}^2-\frac{\mu}{q}t^q\|u\|_{q}^q-\frac{1}{2^*}t^{2^*}\|u\|_{2^*}^{2^*}.
\end{eqnarray*}

Since $q>2$, there exists $u_{1}\in H_{r}^1$, $u_{1}:=tu$ with $t$
sufficiently large such that $\|u_{1}\|>\rho$ and $J(u_{1})< 0$,
proving (ii).
\end{proof}

Applying a variant of the Ambrosetti-Rabinowitz Mountain Pass
Theorem \cite{Ambrosetti-Rabinowitz} we obtain a ($(PS)_{c}$)
sequence $\{u_{n}\}\subset H_{r}^1$ such that
\begin{eqnarray*}J(u_{n})\rightarrow c \hspace{0.2cm}\text{and}\hspace{0.2cm} J'(u_{n})\rightarrow 0,
\end{eqnarray*}
where
\begin{eqnarray}\label{valorC}c:=\inf_{\gamma\in\Gamma}\max_{0\leq t\leq 1}J(\gamma(t)), \hspace{0.2cm} c\geq\alpha
\end{eqnarray}
\noindent and
\begin{eqnarray}\Gamma=\{\gamma\in\mathcal{C}([0,1],H_{r}^{1}(\mathbb{R}^{N}))|\gamma(0)=0, \gamma(1)=u_{1} \}.
\end{eqnarray}

An important tool in our analysis will be the next lemma:

\begin{lemma}
The $(PS)_{c}$ sequence $\{u_{n}\}$ is bounded.
\end{lemma}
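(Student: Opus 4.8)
The plan is to show that any $(PS)_c$ sequence $\{u_n\}$ is bounded in $H^1_r$ by combining the two pieces of information $J(u_n)\to c$ and $\langle J'(u_n),u_n\rangle = o(\|u_n\|)$ in the standard way, exploiting the positivity of the $\Phi$-terms and the sign constraint $-\omega\le\Phi[u_n]\le 0$ (resp. $0\le\Phi[u_n]\le-\omega$) from Proposition \ref{Propriedade-phi}. Concretely, I would form the combination $J(u_n)-\frac{1}{\theta}\langle J'(u_n),u_n\rangle$ for a suitable $\theta$: for the range $4\le q<2^*$ one takes $\theta=q$, while for $2<q<4$ one must take $\theta$ between $q$ and $2^*$ (e.g. $\theta=2^*$, or any $\theta\in(q,2^*)$) so that the $\mu\int|u_n|^q$ term has a favourable sign. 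Writing out this combination, the critical term $\int|u_n|^{2^*}$ gets the coefficient $\frac{1}{\theta}-\frac{1}{2^*}\ge 0$ and drops out (or helps), the $q$-term gets coefficient $\frac{1}{\theta}-\frac{1}{q}$ which is $\le 0$ when $\theta\ge q$, and what remains on the quadratic side is
\begin{eqnarray*}
\Big(\tfrac12-\tfrac1\theta\Big)\int_{\mathbb{R}^N}\big(|\nabla u_n|^2+(m_0^2-\omega^2)u_n^2\big)\,dx
+\Big(\tfrac12-\tfrac1\theta\Big)\int_{\mathbb{R}^N}|\nabla\Phi_n|^2\,dx
+\Big(\tfrac12-\tfrac1\theta\Big)\int_{\mathbb{R}^N}\Phi_n^2 u_n^2\,dx,
\end{eqnarray*}
up to the term coming from differentiating $\Phi[u_n]$ inside $\langle J'(u_n),u_n\rangle$, which I address next.

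The delicate point is the $\Phi$-dependence: when computing $\langle J'(u_n),u_n\rangle$ from \eqref{J'} one has the term $-\int[(\omega+\Phi_n)^2-(m_0^2)]\cdots$ — more precisely $\langle J'(u_n),u_n\rangle$ contains $\int\big(|\nabla u_n|^2+[m_0^2-(\omega+\Phi_n)^2]u_n^2\big)$, whereas $J(u_n)$ contains $\frac12\int(|\nabla u_n|^2+(m_0^2-\omega^2)u_n^2+|\nabla\Phi_n|^2+\Phi_n^2u_n^2)$. Expanding $(\omega+\Phi_n)^2=\omega^2+2\omega\Phi_n+\Phi_n^2$ and using \eqref{system2-modificado2} to replace $\int|\nabla\Phi_n|^2 = -\int\omega u_n^2\Phi_n-\int u_n^2\Phi_n^2$, all the cross terms in $\Phi_n$ can be collected; the sign information $-\omega\Phi_n\ge 0$ and $\Phi_n^2u_n^2\ge 0$ on $\{u_n\ne 0\}$ then guarantees that the net $\Phi$-contribution to $J(u_n)-\frac1\theta\langle J'(u_n),u_n\rangle$ is bounded below by a nonnegative quantity (this is exactly where $4\le q$, or the hypothesis $|m_0|\sqrt{q-2}>|\omega|\sqrt2$ for $2<q<4$, is used to keep the coefficient of $\int u_n^2$ strictly positive after absorbing the $|\omega|^2\int u_n^2$ loss). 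The hypothesis $|m_0|>|\omega|$ (resp. the strengthened inequality) ensures the coefficient multiplying $\int u_n^2$, namely a positive multiple of $m_0^2-\omega^2$ (resp. of $m_0^2(q-2)/2 - \omega^2$ after the appropriate bookkeeping), is positive, so the whole right-hand side dominates $C\|u_n\|^2$ for some $C>0$.

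Putting this together yields an inequality of the shape
\begin{eqnarray*}
C\|u_n\|^2 \;\le\; J(u_n)-\tfrac1\theta\langle J'(u_n),u_n\rangle \;\le\; c+1+o(1)\,\|u_n\|,
\end{eqnarray*}
valid for all large $n$, and the boundedness of $\{u_n\}$ in $H^1_r$ follows immediately. I would organize the write-up as: (1) recall \eqref{J}, \eqref{J'}, \eqref{system2-modificado2} and the sign bounds; (2) treat the case $q\ge 4$ with $\theta=q$; (3) treat the case $2<q<4$ with $\theta\in(q,2^*)$, showing the threshold condition $|m_0|\sqrt{q-2}>|\omega|\sqrt2$ is precisely what is needed for the coefficient of $\int u_n^2$ to stay positive; (4) conclude. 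The main obstacle is step (3): getting the bookkeeping of the several $\Phi_n$-terms right so that, even though one loses a full $\int\omega^2 u_n^2$ in the worst-case estimate of $-\int\omega u_n^2\Phi_n$, the surviving coefficient of $\int u_n^2$ is still strictly positive under the stated hypothesis — equivalently, checking that the chosen $\theta$ can be taken in the open interval $(q,2^*)$ while keeping that coefficient bounded away from zero uniformly in $n$.
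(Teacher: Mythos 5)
Your general strategy---forming the combination $J(u_n)-\tfrac1\theta\langle J'(u_n),u_n\rangle$, expanding $(\omega+\Phi_n)^2$, substituting \eqref{system2-modificado2} for $\int|\nabla\Phi_n|^2$, and using the sign constraints on $\Phi_n$ from Proposition \ref{Propriedade-phi}---is exactly the paper's. But there is a genuine error in your choice of $\theta$ for the range $2<q<4$. You say one ``must take $\theta$ between $q$ and $2^*$ so that the $\mu\int|u_n|^q$ term has a favourable sign,'' yet with $\theta>q$ the coefficient is $\mu\bigl(\tfrac1\theta-\tfrac1q\bigr)<0$, so this term is a \emph{negative} contribution to $J(u_n)-\tfrac1\theta\langle J'(u_n),u_n\rangle$, which is precisely the quantity you want bounded \emph{below} by $C\|u_n\|^2$. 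That cannot be absorbed in general: writing $\int|u_n|^q\le\varepsilon\int|u_n|^{2^*}+C(\varepsilon)\int u_n^2$ and absorbing the $2^*$-piece into the nonnegative $\bigl(\tfrac1\theta-\tfrac1{2^*}\bigr)\int|u_n|^{2^*}$ forces a constant $C(\varepsilon)$ into the $\int u_n^2$ coefficient that can easily make it negative for fixed $m_0,\omega$.

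The paper sidesteps this by taking $\theta=q$ in \emph{both} cases, so the $\mu$-term drops out identically from $qJ(u_n)-\langle J'(u_n),u_n\rangle$, and the critical-exponent term has coefficient $1-\tfrac{q}{2^*}\ge 0$. For $2<q<4$ the only term requiring care is then $\omega\bigl(2-\tfrac q2\bigr)\int\Phi_n u_n^2\,dx$, which by $-\omega\le\Phi_n\le 0$ (or the symmetric bound for $\omega<0$) satisfies the worst-case estimate
\begin{equation*}
\omega\Bigl(2-\tfrac q2\Bigr)\int_{\mathbb{R}^N}\Phi_n u_n^2\,dx \;\ge\; -\,\omega^2\,\tfrac{4-q}{2}\int_{\mathbb{R}^N}u_n^2\,dx,
\end{equation*}
so the surviving coefficient of $\int u_n^2$ is $\tfrac{(q-2)m_0^2-2\omega^2}{2}$, and the hypothesis $|m_0|\sqrt{q-2}>|\omega|\sqrt2$ is exactly what makes it positive. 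So the correct bookkeeping is simpler than you anticipated: fix $\theta=q$ once and for all, and let the strengthened hypothesis do the work when $2<q<4$; no interpolation or intermediate $\theta$ is needed or helpful.
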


\begin{proof}
By hypothesis, let $\{u_{n}\}\subset H_{r}^{1}$ be such that
$-\langle J'(u),v\rangle \leq o(1)\|u_{n} \|$ and $|J(u_{n})|\leq
M$, for some positive constant M. Then from (\ref{J}) and
(\ref{J'}),
\begin{eqnarray}\label{ps-bounded-inequality}
&&\hspace{-0.5cm} qM + o(1)\|u_{n} \| \geq  q J(u_{n})-\langle
J'(u_{n}),u_{n}\rangle = \nonumber \\
&&= \Big(\frac{q}{2}-1\Big)\int_{\mathbb{R}^{N}}{\Big( |\nabla
u_{n}|^{2}+ [m_{0}^{2}-\omega^{2}]u_{n}^{2}\Big)dx }
+\Big(2-\frac{q}{2}\Big) \int_{\mathbb{R}^{N}}{\omega
\Phi_{n}u_{n}^{2}}dx+\int_{\mathbb{R}^{N}}{\Phi_{n}^{2}u_{n}^{2}}dx
+\nonumber\\
&&\hspace{0.5cm}
+\Big(1-\frac{q}{2^{*}}\Big)\int_{\mathbb{R}^{N}}{|u_{n}|^{2^
{*}}}dx \nonumber\\
&& \geq \Big(\frac{q-2}{2}\Big)\int_{\mathbb{R}^{N}}{\Big( |\nabla
u_{n}|^{2}+ [m_{0}^{2}-\omega^{2}]u_{n}^{2}\Big) }dx
-\omega\Big(\frac{q-4}{2}\Big) \int_{\mathbb{R}^{N}}{
\Phi_{n}u_{n}^{2}}dx.
\end{eqnarray}

There are two cases to be considered: either $2<q<4$ or  $4\leq q
< 2^{*}$.

If $4\leq q < 2^{*}$, then by Proposition \ref{Propriedade-phi}
and inequality (\ref{ps-bounded-inequality})
\begin{eqnarray*}
qM + o(1)\|u_{n} \| &\geq & C\|u_{n} \|^{2}+
\omega\Big(\frac{q-4}{2}\Big)\int_{\mathbb{R}^{N}}{(-\Phi_{n})u_{n}^{2} }dx\\
&\geq& C\|u_{n} \|^{2}
\end{eqnarray*}

\noindent and we deduce that $\{u_{n} \}$ is bounded in
$H_{r}^{1}$.

But if $2<q<4$ and using again (\ref{ps-bounded-inequality}) and
Proposition \ref{Propriedade-phi} we get
\begin{eqnarray*}
qM + o(1)\|u_{n} \| &\geq &
\Big(\frac{q-2}{2}\Big)\int_{\mathbb{R}^{N}}{ |\nabla
u_{n}|^{2}}dx+ \Big(\frac{(q-2)m_{0}^{2}-2\omega^{2}}{2}
 \Big)\int_{\mathbb{R}^{N}}{|u_{n}^{2}|}dx\\
&\geq& C\|u_{n} \|^{2},
\end{eqnarray*}

\noindent where $(q-2)m_{0}^{2}-2\omega^{2}>0$ by hypothesis,
which implies that $\{u_{n} \}$ is again bounded in $H_{r}^{1}$.
\end{proof}

In view of the previous lemma we have that $\{\Phi_{n}\}$ is
bounded in $\mathcal{D}_{r}^{1,2}$ because
\begin{eqnarray*}\|\Phi_{n}\|_{\mathcal{D}_{r}^{1,2}}^2 &\leq & \int_{\mathbb{R}^N} |\nabla\Phi_{n}|^2 dx + \int_{\mathbb{R}^N} |\Phi_{n}^2 u_{n}^2| dx \\
 &=& -\omega \int_{\mathbb{R}^N} |\Phi_{n} u_{n}^2| dx \leq C \omega \|\Phi_{n}\|_{\mathcal{D}_{r}^{1,2}}
 \|u_{n}\|_{2\cdot 2^{*}/(2^{*} -1)}^2.
\end{eqnarray*}

So, passing to a subsequence if necessary, we may assume

\begin{itemize}
    \item []\hspace{3cm} $u_{n}\rightharpoonup u$,\hspace{0.3cm} weakly in $H_{r}^{1}$, \hspace{0.1cm} $n\rightarrow\infty$,
    \item [] \hspace{3cm} $\Phi_{n}\rightharpoonup \phi$,\hspace{0.3cm} weakly in
    $\mathcal{D}_{r}^{1,2}$, \hspace{0.1cm} $n\rightarrow\infty$.
\end{itemize}

\begin{lemma}\label{existence-of-Phi} $\phi=\Phi[u]$ and $\Phi_{n}\rightarrow \Phi$
strongly in $\mathcal{D}_{r}^{1,2}$.
\end{lemma}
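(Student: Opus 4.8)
The plan is to pass to the limit in the defining equation \eqref{system2-modificado} for $\Phi_n$ and then upgrade weak convergence to strong convergence via the equation itself. First I would recall that $\Phi_n = \Phi[u_n]$ solves
$$-\Delta \Phi_n + u_n^2 \Phi_n = -\omega u_n^2 \quad \text{in } \mathcal{D}_r^{1,2},$$
and that $u_n \rightharpoonup u$ in $H_r^1$ implies, by the compact embedding $H_r^1 \hookrightarrow L_r^p$ for $2<p<2^*$, that $u_n \to u$ strongly in $L_r^p$ for every such $p$ (and a.e.\ along a subsequence). The key is to choose the right exponent so that the nonlinear terms $u_n^2 \Phi_n$ and $u_n^2$ converge in the sense of distributions. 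Testing against $\varphi \in \mathcal{C}_0^\infty(\mathbb{R}^N)$, the term $\int \nabla \Phi_n \cdot \nabla \varphi$ passes to the limit by weak convergence in $\mathcal{D}_r^{1,2}$; for $\int u_n^2 \Phi_n \varphi$ I would write $u_n^2 \to u^2$ strongly in $L_r^{2^*/2}$ (using $u_n \to u$ in $L_r^{2^*}$, valid for $N \ge 3$) while $\Phi_n \rightharpoonup \phi$ in $L_r^{2^*}$, and combine on the fixed compact support of $\varphi$; similarly $\int u_n^2 \varphi \to \int u^2 \varphi$. Hence $\phi$ solves $-\Delta \phi + u^2 \phi = -\omega u^2$, and by the uniqueness part of Proposition \ref{Propriedade-phi} we conclude $\phi = \Phi[u]$.

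For the strong convergence, I would subtract the equations for $\Phi_n$ and $\Phi[u]$ and test with $\Phi_n - \Phi[u]$, which is admissible since both lie in $\mathcal{D}_r^{1,2}$. This yields
$$\int_{\mathbb{R}^N} |\nabla(\Phi_n - \Phi[u])|^2 \, dx = -\int_{\mathbb{R}^N} \big(u_n^2 \Phi_n - u^2 \Phi[u]\big)(\Phi_n - \Phi[u])\, dx - \omega \int_{\mathbb{R}^N} (u_n^2 - u^2)(\Phi_n - \Phi[u])\, dx.$$
I would then split the first integral by adding and subtracting $u_n^2 \Phi[u]$: the piece $\int u_n^2 (\Phi_n - \Phi[u])(\Phi_n - \Phi[u]) = \int u_n^2 (\Phi_n - \Phi[u])^2 \ge 0$ has a favourable sign and can be discarded, while the remaining piece $\int (u_n^2 - u^2)\Phi[u](\Phi_n - \Phi[u])$ tends to $0$ because $u_n^2 - u^2 \to 0$ in $L_r^{2^*/2}$, $\Phi[u] \in L_r^{2^*}$ is fixed, and $\Phi_n - \Phi[u]$ is bounded in $L_r^{2^*}$ (using Hölder with exponents $\tfrac{2^*}{2}, 2^*, 2^*$, whose reciprocals sum to $1$ precisely when $N = 4$; for general $N \ge 3$ one instead uses $u_n^2 \to u^2$ in $L_r^{N/2}$ together with the $\mathcal{D}^{1,2}$-bound of the $\Phi$'s). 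The last integral is handled the same way. Therefore $\|\nabla(\Phi_n - \Phi[u])\|_2^2 \le o(1)$, i.e.\ $\Phi_n \to \Phi[u]$ strongly in $\mathcal{D}_r^{1,2}$.

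The main obstacle I anticipate is purely bookkeeping with Sobolev exponents: unlike the three-dimensional case, in higher dimensions the product $u_n^2 \Phi_n$ need not be controlled by a single subcritical norm, so one must carefully exploit that $u_n \to u$ in every $L_r^p$ with $p < 2^*$ (from radial compactness) and that $\Phi_n$ is bounded in $L_r^{2^*}$ (from the $\mathcal{D}^{1,2}$-bound established just before the lemma), interpolating as needed to make every application of Hölder's inequality legitimate across all dimensions covered by Theorem \ref{principal}. The sign trick $\int u_n^2(\Phi_n - \Phi[u])^2 \ge 0$ is what makes the strong-convergence step work without any compactness of the critical embedding.
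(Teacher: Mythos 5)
Your overall strategy (pass to the limit in the equation for $\Phi_n$, identify the limit through the uniqueness part of Proposition \ref{Propriedade-phi}, then subtract the two equations, test with $\Phi_n-\Phi[u]$ and discard the term $\int u_n^2(\Phi_n-\Phi[u])^2\,dx\ge 0$) is exactly the standard route; the paper itself gives no details and simply invokes Lemma 3.2 of \cite{Cassani}. However, as written your argument has a genuine gap precisely in the dimensions this paper is about. The claim ``$u_n\to u$ in $L_r^{2^*}$, valid for $N\ge 3$'' is false: the critical embedding $H^1_r\hookrightarrow L^{2^*}$ is \emph{never} compact (not even for radial functions, and not even locally, because of concentration) --- the Strauss-type compactness you may use holds only for $2<p<2^*$. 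Your proposed substitute, ``$u_n^2\to u^2$ in $L_r^{N/2}$'', requires $u_n\to u$ in $L^N_r$, which is compact only when $2<N<2^*$, i.e.\ only for $N=3$; for $N=4$ one has $N=2^*$ (no compactness), and for $N\ge 5$ one has $N>2^*$, so $H^1\not\hookrightarrow L^N$ and the quantity is not even controlled. Consequently the Hölder bookkeeping in both the identification step and the strong-convergence step does not close for $N\ge 4$, and the step ``$\int(u_n^2-u^2)\Phi[u](\Phi_n-\Phi[u])\,dx\to 0$'' is not justified.

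The missing ingredient is the uniform pointwise bound from Proposition \ref{Propriedade-phi}, $|\Phi_n|\le|\omega|$ and $|\Phi[u]|\le|\omega|$, which you never use. Combined with the $\mathcal{D}^{1,2}$-bound, $\Phi_n-\Phi[u]$ is bounded in $L^s$ for every $2^*\le s\le\infty$ by interpolation, and this gives enough freedom in Hölder's inequality to place the factor $u_n-u$ in some $L^p_r$ with $2<p<2^*$, where the radial compact embedding applies: writing $u_n^2-u^2=(u_n-u)(u_n+u)$ one estimates
\begin{equation*}
\int_{\mathbb{R}^N}|u_n^2-u^2|\,\bigl(|\Phi[u]|+|\Phi_n-\Phi[u]|\bigr)|\Phi_n-\Phi[u]|\,dx
\le C\,\|u_n-u\|_{p}\,\|u_n+u\|_{r}\,\|\Phi_n-\Phi[u]\|_{s},
\end{equation*}
with $\tfrac1p+\tfrac1r+\tfrac1s=1$, $2\le r\le 2^*$, $2^*\le s\le\infty$, and such a choice with $2<p<2^*$ exists in every dimension $N\ge3$ (e.g.\ $(p,r,s)=(8/3,2,8)$ for $N=4$); the analogous device (local Rellich compactness plus a.e.\ convergence and the $L^\infty$ bound, or weak convergence of $u_n^2$ in $L^{N/(N-2)}$ against $\phi\varphi$) repairs the identification of the weak limit. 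With these corrections your subtraction-and-sign argument does yield $\Phi_n\to\Phi[u]$ in $\mathcal{D}_r^{1,2}$, but without them the proof does not go through for $N\ge4$, which is the main point of extending Cassani's three-dimensional lemma.
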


\begin{proof}
 The proof is essentially as in Lemma 3.2 of \cite{Cassani}, which can be easily extended in dimension $N$.
\end{proof}

Moreover, since the Sobolev embeddings $H_{r}^{1} \hookrightarrow
L_{r}^s$, $2 < s < 2^*$, are compact we conclude that
\begin{itemize}
    \item []\hspace{3cm} $u_{n}\rightarrow u$,\hspace{0.3cm} strongly in $L_{r}^{s}$,\hspace{0.1cm}
 for $2<s<2^{*}$, \hspace{0.1cm} $n\rightarrow\infty$.
\end{itemize}

Now we show that the pair ($u, \Phi$) satisfies the
$(\mathcal{KGM})$ system in the weak sense. Indeed, since
$J'(u_{n})\rightarrow 0$ we have $\forall v\in H_{r}^1$,
\begin{eqnarray}\label{eq}&&\hspace{-1cm}\int_{\mathbb{R}^N} \Big( \nabla u_{n}\nabla v+(m_{0}^2-\omega^2)u_{n}v \Big)dx =
\int_{\mathbb{R}^N}u_{n}\Phi_{n}^2 v dx + 2\omega\int_{\mathbb{R}^N}\Phi_{n}u_{n} v dx +\nonumber\\
&&\hspace{3.8cm}+ \mu \int_{\mathbb{R}^N}|u_{n}|^{q-2}u_{n} v dx+
\int_{\mathbb{R}^N}|u_{n}|^{2^*-2}u_{n} v dx + o(1)
\end{eqnarray}

We will prove that
\begin{eqnarray}\label{eq1}\int_{\mathbb{R}^N}u_{n}\Phi_{n}^2 v dx + 2\omega\int_{\mathbb{R}^N}\Phi_{n}u_{n} v dx
\stackrel{n\rightarrow\infty}{\longrightarrow}
\int_{\mathbb{R}^N}u\Phi^2 v dx + 2\omega\int_{\mathbb{R}^N}\Phi u
v dx ,
\end{eqnarray}
\begin{eqnarray}\label{eq2}\int_{\mathbb{R}^N}|u_{n}|^{q-2}u_{n} v dx
\stackrel{n\rightarrow\infty}{\longrightarrow}
\int_{\mathbb{R}^N}|u|^{q-2}uv dx
\end{eqnarray}
and
\begin{eqnarray}\label{eq3}\int_{\mathbb{R}^N}|u_{n}|^{2^*-2}u_{n} v dx
\stackrel{n\rightarrow\infty}{\longrightarrow}
\int_{\mathbb{R}^N}|u|^{2^*-2}u v dx
\end{eqnarray}

\vspace{0.3cm}

\noindent \textit{Verification of (\ref{eq1})}.

Using the generalized H\"{o}lder inequality, note that
\begin{eqnarray*}&&\hspace{-3.9cm}\int_{\mathbb{R}^N}|\Phi u -\Phi_{n}u_{n}| |v| dx \leq \|\Phi-\Phi_{n}\|_{2^*}  \|u\|_{2\cdot 2^*/(2^*-2)}   \|v\|_{2} +\\
&&\hspace{0.3cm}+\|\Phi_{n}\|_{2^*}  \|u-u_{n}\|_{2\cdot
2^*/(2^*-2)}   \|v\|_{2}
\end{eqnarray*}
and
\begin{eqnarray*}&&\hspace{-1.2cm}\int_{\mathbb{R}^N}|u\Phi^2 -u_{n}\Phi_{n}^2| |v| dx \leq \|\Phi-\Phi_{n}\|_{2^*}  \|\Phi+\Phi_{n}\|_{2^*}\|u_{n}\|_{2^*}  \|v\|_{2} +\\
&&\hspace{3cm}+\|u-u_{n}\|_{2\cdot 2^*/(2^*-2)}
\|\Phi^2\|_{2\cdot 2^*/(2^*-2)}\|v\|_{2\cdot 2^*/(2^*-2)}.
\end{eqnarray*}

Then, by Lemma \ref{existence-of-Phi} we get (\ref{eq1}).

\vspace{0.3cm}

\noindent \textit{Verification of (\ref{eq2})-(\ref{eq3})}.

The convergence in (\ref{eq2}) follows from the compactness of the
embedding $H_{r}^{1} \hookrightarrow L_{r}^q $ and the convergence
in (\ref{eq3}) holds since $\{u_{n}\}$ is bounded in
$L^{2^*}_{r}$.

\vspace{0.3cm}

Hence by (\ref{eq1}), (\ref{eq2}) and (\ref{eq3}) together with
(\ref{eq}), we conclude that $(u,\Phi)$ is a weak solution for
$(\mathcal{KGM})$ system.

Due to the lack of compactness, we must prove that actually $u$
does not vanish.

\begin{lemma}\label{c} The number $c$ given in (\ref{valorC}) satisfies
\end{lemma}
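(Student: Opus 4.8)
The plan is to establish the standard Brézis--Nirenberg threshold estimate, namely that the mountain-pass level satisfies
\begin{eqnarray*}
c < \frac{1}{N}\, S^{N/2},
\end{eqnarray*}
where $S$ is the best Sobolev constant for the embedding $\mathcal{D}^{1,2}(\mathbb{R}^N)\hookrightarrow L^{2^*}(\mathbb{R}^N)$. This is precisely the level below which a $(PS)_c$ sequence for a critical problem of this type recovers compactness (the concentration measure cannot carry a full bubble of mass $S^{N/2}$), so that the weak limit $u$ found above is nontrivial and hence a genuine solution. First I would recall that by the mountain-pass characterization $c \le \max_{t\ge 0} J(tu)$ for any fixed nonzero radial $u$, so it suffices to exhibit one such $u$ (or one-parameter family) along which this max stays strictly below $\frac1N S^{N/2}$.

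The natural choice is the family of truncated Aubin--Talenti instantons $u_\varepsilon(x) = \psi(x)\, U_\varepsilon(x)$ with $U_\varepsilon(x) = [\varepsilon/(\varepsilon^2+|x|^2)]^{(N-2)/2}$ (up to normalizing constants) and $\psi$ a fixed radial cutoff, and to estimate $\max_{t\ge 0} J(t u_\varepsilon)$ as $\varepsilon\to 0$. The standard expansions give $\|\nabla u_\varepsilon\|_2^2 = K_1 + O(\varepsilon^{N-2})$, $\|u_\varepsilon\|_{2^*}^{2^*} = K_2 + O(\varepsilon^N)$ with $K_1/K_2^{2/2^*} = S$, while the crucial lower-order terms behave like $\|u_\varepsilon\|_2^2 \sim \varepsilon^2$ for $N\ge 5$ (with a logarithmic correction for $N=4$, $\varepsilon$ for $N=3$) and $\|u_\varepsilon\|_q^q \sim \varepsilon^{N-(N-2)q/2}$. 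One also needs to control the coupling contribution coming from $\Phi[tu_\varepsilon]$: from Proposition \ref{Propriedade-phi} and \eqref{system2-modificado2} one has $0 \le -\int \omega\, \Phi[u] u^2\,dx \le \omega^2\|u\|_2^2$ and $\int |\nabla\Phi[u]|^2 + \int \Phi[u]^2 u^2 \le \omega^2\|u\|_2^2$, so the entire $\phi$-block in $J$ is bounded by $C\,\varepsilon^2$ (resp. the appropriate power), hence absorbed into the same error analysis as the mass term. Carrying out the one-variable maximization in $t$ and comparing the negative contribution $-\mu t^q \|u_\varepsilon\|_q^q/q$ (which dominates when $q<2^*$ and which for the borderline cases $q=4$, $N=3$ or $\tfrac83\le q<2^*$, $N=5$ requires $\mu$ large) against the positive $\varepsilon^2$-type term yields the strict inequality $c<\frac1N S^{N/2}$ in exactly the dimension/exponent ranges listed in Theorem \ref{principal}. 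The bookkeeping of which power of $\varepsilon$ wins is what dictates the case split (i)--(iii).

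The main obstacle I expect is precisely this competition of scales: one must check that $\|u_\varepsilon\|_q^q$ (times $\mu$) decays slower than, or is of the same order as, $\|u_\varepsilon\|_2^2$ plus the $\phi$-term, for otherwise the subcritical perturbation is too weak to push the level down. For $N\ge 6$ one has $N-(N-2)q/2 < 2$ for all $q\in(2,2^*)$, so $\|u_\varepsilon\|_q^q$ dominates $\varepsilon^2$ and the estimate goes through for every $\mu>0$; for $N=4,5$ the inequality $N-(N-2)q/2 < 2$ fails for $q$ near $2^*$, which is exactly why large $\mu$ is needed there (and why for $N=3$ one must also require $q>4$, or $\mu$ large if $q\le4$, to beat the slower $\varepsilon$-decay of the mass term). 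A secondary technical point is justifying that $\max_{t\ge0} J(tu_\varepsilon)$ is attained at some $t_\varepsilon$ bounded away from $0$ and $\infty$ uniformly in $\varepsilon$ — this follows from the mountain-pass geometry already proved in the previous lemma, since $J(tu_\varepsilon)\to-\infty$ as $t\to\infty$ and $J(tu_\varepsilon)>\alpha$ near $t$ with $\|tu_\varepsilon\|=\rho$ — after which the estimate reduces to plugging the expansions into an elementary function of $t$ and optimizing.
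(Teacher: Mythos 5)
Your overall route is the same as the paper's: test the mountain-pass level with truncated Aubin--Talenti instantons, maximize in $t$, and run the Br\'ezis--Nirenberg comparison of scales. The one genuine difference is the treatment of the coupling term. You bound the whole $\phi$-block by the pointwise estimate of Proposition \ref{Propriedade-phi} together with (\ref{system2-modificado2}), namely $\tfrac12\int(|\nabla\Phi[tu_\varepsilon]|^2+\Phi[tu_\varepsilon]^2 t^2u_\varepsilon^2)\,dx=-\tfrac{\omega}{2}\int t^2u_\varepsilon^2\,\Phi[tu_\varepsilon]\,dx\le \tfrac{\omega^2}{2}t^2\|u_\varepsilon\|_2^2$, so the $\phi$-contribution is absorbed into the mass term and the estimate reduces to the classical one with mass $m_0^2$. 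The paper instead bounds this block by $C\,t_\varepsilon^4\|v_\varepsilon\|_{2\cdot 2^*/(2^*-1)}^4$ and must carry this quartic term through the case analysis in Claim 2; that term decays much more slowly than $\|u_\varepsilon\|_2^2$ (in the paper's normalization it is of order $\varepsilon^{(6-N)/2}$ for $N\ge4$), and it is precisely what forces the paper's extra restrictions (e.g.\ $\mu$ large for $N=5$, $\tfrac83\le q<2^*$). Your simpler bound is correct and, if carried out, actually gives the threshold inequality for every $\mu>0$ whenever $N\ge4$, i.e.\ it is more efficient than the paper's argument.

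There is, however, a concrete error in your bookkeeping paragraph, which is the heart of the lemma. The inequality $N-(N-2)q/2<2$ is equivalent to $q>2$, for every $N$; it does not ``fail for $q$ near $2^*$ when $N=4,5$.'' So the explanation you give for the case split in Theorem \ref{principal} is wrong. In your normalization the quantities to beat are the mass/$\phi$ term, of order $\varepsilon^2$ ($\varepsilon^2|\log\varepsilon|$ if $N=4$, $\varepsilon$ if $N=3$), \emph{and} the cut-off error in $\|\nabla u_\varepsilon\|_2^2$ and $\|u_\varepsilon\|_{2^*}^{2^*}$, of order $\varepsilon^{N-2}$; the subcritical term $\mu\varepsilon^{\,N-(N-2)q/2}$ dominates the first for all $q>2$ and the second iff $q>4/(N-2)$, which is automatic for $N\ge4$ and gives exactly the $N=3$ dichotomy ($q>4$ for arbitrary $\mu$, otherwise $\mu$ large). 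You should also state explicitly where $2<q\le N/(N-2)$ is handled (there $\|u_\varepsilon\|_q^q$ picks up a different power/logarithm), and justify $c\le\sup_{t\ge0}J(tu_\varepsilon)$ by exhibiting an admissible path in $\Gamma$, as the paper does around (\ref{sup}). With these corrections your argument is complete and covers (indeed slightly more than) the cases listed in the theorem.
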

\begin{equation} c<\frac{1}{N}S^{N/2},
\end{equation}
\noindent \textit{where S is the best Sobolev constant, namely}
\begin{eqnarray*} S:=\inf\limits_{u\in \mathcal{D}^{1,2}(\mathbb{R}^{N})\atop_{u\neq
0}}\frac{\int|\nabla u|^{2}dx}{\Big( \int{|u|^{2^{*}}}
dx\Big)^{2/2^{*}}}.
\end{eqnarray*}

For a moment, suppose Lemma \ref{c} holds true, we will prove that
$u\neq 0$. Consider $u\equiv 0$. Since $J'(u_{n})\rightarrow 0$
and $u_{n}\rightarrow 0$ in $L^q_{r}$ as $n\rightarrow\infty$, we
may assume
\begin{eqnarray*}\int_{\mathbb{R}^N}\Big(|\nabla u_{n}|^{2}+
(m_{0}^{2}-\omega^{2})u_{n}^{2}\Big)dx
\stackrel{n\rightarrow\infty}{\longrightarrow} \ell
\end{eqnarray*}
and
\begin{eqnarray*}\int_{\mathbb{R}^N}|u_{n}|^{2^*}dx \stackrel{n\rightarrow\infty}{\longrightarrow} \ell, \hspace{0.3cm} \ell \geq 0.
\end{eqnarray*}

Consequently,
\begin{eqnarray*}J(u_{n})\stackrel{n\rightarrow\infty}{\longrightarrow} \Big(\dfrac{1}{2}-\dfrac{1}{2^*}\Big)\ell
\end{eqnarray*}
where now $\ell>0$, since $c>0$.

By the definition of $S$,
\begin{eqnarray*}S\leq \dfrac{\displaystyle\int_{\mathbb{R}^N} \Big( |\nabla u_{n}|^2+(m_{0}^2-\omega^2)u_{n}^2 \Big)dx}{\Big(\int{|u|^{2^{*}}}dx
\Big)^{2/2^{*}}}\stackrel{n\rightarrow\infty}{\longrightarrow}
\ell^{2/N},
\end{eqnarray*}
from what we conclude that
\begin{eqnarray*}c=\Big(\dfrac{1}{2}-\dfrac{1}{2^*}\Big)\ell\geq \frac{1}{N}S^{N/2}
\end{eqnarray*}
contradicting Lemma \ref{c}.

\vspace{0.3cm}

\noindent \textit{Proof of Lemma \ref{c}}. This proof uses a
technique by Br\'{e}zis and Nirenberg \cite{Brezis-Nirenberg} and
some of its variants. However we follow more closely Miyagaki
\cite{Olimpio}.

It suffices to show that
\begin{eqnarray}\label{suffices}\sup_{t\geq 0} J(tv_{0})< \frac{1}{N}
S^{\frac{N}{2}}
\end{eqnarray}
\noindent for some $v_{0}\in H^{1}_{r}, v_{0}\neq 0 $.

Indeed, observing that $J(tv_{0})\rightarrow -\infty$ as
$t\rightarrow\infty$ and letting $\gamma\in\Gamma$ we have
\begin{eqnarray}\label{sup}J(\gamma(t))\leq \sup_{t\geq 0} J(tv_{0}),
\hspace{0.3cm} 0\leq t\leq 1
\end{eqnarray}
so that
\begin{eqnarray*}c\leq\sup_{t\geq 0} J(tv_{0})<\frac{1}{N}
S^{\frac{N}{2}}.
\end{eqnarray*}

In order to prove (\ref{sup}) consider $R>0$ fixed and a cut-off
function $\varphi\in C_{0}^{\infty}$ such that
\begin{eqnarray*}\varphi|B_{R}=1, \hspace{0.3cm} 0\leq\varphi\leq
1 \hspace{0.1cm}\text{in}\hspace{0.1cm} B_{2R}\hspace{0.3cm}
\text{and}\hspace{0.3cm} \text{supp}\hspace{0.06cm} \varphi\subset
B_{2R}.
\end{eqnarray*}

Let $\varepsilon >0$ and define
$w_{\varepsilon}:=u_{\varepsilon}\varphi$ where
$u_{\varepsilon}\in \mathcal{D}^{1,2}$ is the well known Talenti's
function (see \cite{Talenti})
\begin{eqnarray*}u_{\varepsilon}(x)=\frac{[N(N-2)\varepsilon]^{\frac{N-2}{4}}}{\Big( \varepsilon+|x|^{2} \Big)^{\frac{N-2}{2}}},
\hspace{0.3cm} x\in\mathbb{R}^{N}, \varepsilon > 0
\end{eqnarray*}

\noindent and also consider $v_{\varepsilon}\in C_{0}^{\infty}$
given by
\begin{eqnarray}\label{v-epsilon definicao}v_{\varepsilon}:=\frac{w_{\varepsilon}}{\|w_{\varepsilon}\|_{L^{2^{*}}(B_{2R})}}.
\end{eqnarray}

From the estimates given in \cite{Brezis-Nirenberg} we have, as
$\varepsilon\rightarrow 0$,
\begin{eqnarray}\label{X-epsilon} X_{\varepsilon}:=\|\nabla
v_{\varepsilon}\|_{2}^{2}\leq S+O(\varepsilon^{\delta}),
\hspace{0.3cm} \text{where}\hspace{0.1cm} \delta=\frac{N-2}{2}.
\end{eqnarray}

Since $\lim \limits_{t\rightarrow\infty}
J(tv_{\varepsilon})=-\infty \hspace{0.1cm} \forall\varepsilon$,
there exists $t_{\varepsilon}\geq 0$ such that $\sup\limits_{t\geq
0}J(tv_{\varepsilon})=J(t_{\varepsilon}v_{\varepsilon })$ and we
may assume without loss of generality that $t_\varepsilon\geq
C_{0}>0$.

\vspace{0.3cm}

\noindent\textbf{Claim 1}. The following estimate holds
\begin{eqnarray}t_{\varepsilon}\leq \Big( \int_{B_{2R}}|\nabla v_{\varepsilon}|^{2}dx+\int_{B_{2R}} m_{0}^{2}v_{\varepsilon}^{2}dx
\Big)^{1/(2^{*}-2)}:=r_{\varepsilon}.
\end{eqnarray}

\noindent\textit{Proof of Claim 1}: Letting
$\gamma(t):=J(tv_{\varepsilon})$ we have, for $t >
r_{\varepsilon}$,
\begin{eqnarray*}
\gamma'(t)&=& J'(tv_{\varepsilon})(v_{\varepsilon})\\
&=& t r_{\varepsilon}^{2^{*}-2}-t^{2^{*}-1}
-t\int_{B_{2R}}(\omega+\phi[tv_{\varepsilon}])^{2}v_{\varepsilon}^{2}dx-\mu
t^{q-1}\int_{B_{2R}}|v_{\varepsilon}|^{q}dx\\
&<& 0.
\end{eqnarray*}

\hspace{12.5cm}$\Box$

Now, the function of $t$
\begin{eqnarray*}\frac{t^{2}}{2}r_{\varepsilon}^{2^{*}-2}-\frac{t^{2^{*}}}{2^{*}}
\end{eqnarray*}
\noindent is increasing on $[0,r_{\varepsilon})$, hence using
(\ref{X-epsilon}) we conclude that
\begin{eqnarray*}&&\hspace{-0.5cm}J(t_{\varepsilon}v_{\varepsilon}) \leq
\frac{1}{N}\Big(S+O(\varepsilon^{\delta})+\int_{B_{2R}}m_{0}^{2}v_{\varepsilon}^{2}dx
\Big)^{N/2} -
\frac{t_{\varepsilon}^{2}}{2}\int_{B_{2R}}\omega^{2}v_{\varepsilon}^{2}dx+\\
&&\hspace{0.9cm}+
Ct_{\varepsilon}^{4}\|v_{\varepsilon}\|^4_{2\cdot 2^{*}/(2^{*}-1)}
-
\frac{\mu}{q}t_{\varepsilon}^{q}\int_{B_{2R}}|v_{\varepsilon}|^{q}dx.
\end{eqnarray*}

Recalling that
\begin{eqnarray*}(a+b)^{\alpha}\leq
a^{\alpha}+\alpha(a+b)^{\alpha-1}b
\end{eqnarray*}
which is valid for $a,b\geq 0$, $\alpha\geq 1$ we obtain
\begin{eqnarray*}&&\hspace{-0.5cm} J(t_{\varepsilon}v_{\varepsilon}) \leq
\frac{1}{N}S^{N/2}+O(\varepsilon^{\delta})
+K_{1}\int_{B_{2R}}{m_{0}^{2}v_{\varepsilon}^{2}}dx+
\\
&&\hspace{0.9cm}
-K_{2}\int_{B_{2R}}{\omega^{2}v_{\varepsilon}^{2}}dx
 -\mu
K_{3}\int_{B_{2R}}{|v_{\varepsilon}|^{q}}dx +
K_{4}\|v_{\varepsilon}\|^{4}_{2\cdot 2^{*}/(2^{*}-1)},
\end{eqnarray*}
where $K_{i}(\varepsilon)\geq K_{0}>0$.

We contend that

\vspace{0.3cm}

\noindent\textbf{Claim 2}.
\begin{eqnarray}\label{star} \lim_{\varepsilon\rightarrow 0}\frac{1}{\varepsilon^{\delta}}\Big( \int_{B_{2R}}(v_{\varepsilon}^{2}-\mu
v_{\varepsilon}^{q})dx +\|v_{\varepsilon}\|_{2\cdot
2^{*}/(2^{*}-1)}^{4}  \Big)=-\infty.
\end{eqnarray}

Assuming (\ref{star}) for a while we have
\begin{eqnarray*}J(t_{\varepsilon}v_{\varepsilon})<
\frac{1}{N}S^{N/2}, \hspace{0.3cm} \varepsilon\hspace{0.1cm}
\text{small}
\end{eqnarray*}
showing (\ref{suffices}) and thus Lemma \ref{c}.

\vspace{0.3cm}

\noindent\textit{Proof of Claim 2}:

As in \cite{Brezis-Nirenberg}, we obtain
\begin{eqnarray}\int_{B_{2R}}|w_{\varepsilon}|^{2^{*}}dx=(N(N-2))^{N/2}\int_{\mathbb{R}^{N}}\frac{1}{(1+|x|^{2})^{N}}dx+O(\varepsilon^{N/2})
\end{eqnarray}
so, in view of (\ref{v-epsilon definicao}), it suffices evaluate
(\ref{star}) with $w_{\varepsilon}$ instead of $v_{\varepsilon}$.
In order to prove (\ref{star}) we must show
\begin{eqnarray}\label{star2}\lim\limits_{\varepsilon\rightarrow
0}\frac{1}{\varepsilon^{\delta}}\Big[
\int_{B_{R}}(w_{\varepsilon}^{2}-\mu w_{\varepsilon}^{q})dx
+\Big(\int_{B_{R}}|w_{\varepsilon}|^{\frac{4N}{N+2}}dx
\Big)^{\frac{N+2}{N}} \Big]=-\infty
\end{eqnarray}
and also that
\begin{eqnarray}\label{star22}\dfrac{1}{\varep^\delta}\Big[\int_{B_{2R}\setminus B_{R}}(v_{\varepsilon}^{2}-\mu
v_{\varepsilon}^{q})dx +\Big(\int_{B_{2R}\setminus
B_{R}}|v_{\varepsilon}|^{\frac{4N}{N+2}}dx
\Big)^{\frac{N+2}{N}}\Big]
\end{eqnarray}
is bounded.

\vspace{0.3cm}

\noindent \textit{Verification of (\ref{star2})}. Let
\begin{eqnarray*}
I_{\varepsilon}:=\frac{1}{\varepsilon^{\delta}}\Big[
\int_{B_{R}}(w_{\varepsilon}^{2}-\mu w_{\varepsilon}^{q})dx
+\Big(\int_{B_{R}}|w_{\varepsilon}|^{\frac{4N}{N+2}}dx
\Big)^{\frac{N+2}{N}}\Big]
\end{eqnarray*}

On $B_{R}$, by changing variables we have (see \cite{Cunha})
\begin{eqnarray}\label{aqui}
I_{\varepsilon}&\leq & \varepsilon^{1-\delta}\Big[ C_{1}
\int_{0}^{\frac{R}{\sqrt{\varep}}}\frac{r^{N-1}}{(1+r^{2})^{N-2}}
dr - \mu C_{2}\varep^{-\frac{(N-2)}{4}q+\frac{N}{2}-1}
\int_{0}^{\frac{R}{\sqrt{\varep}}}\frac{r^{N-1}}{(1+r^{2})^{(N-2)q/2}
} dr \nonumber\\
&+& C_{3}\varep^{ \frac{4-N}{2}}\Big(
\int_{0}^{\frac{R}{\sqrt{\varep}}}\frac{r^{N-1}}{(1+r^{2})^{\frac{2N(N-2)}{N+2}}
} dr \Big)^{\frac{N+2}{N}} \Big]
\end{eqnarray}
where $C_{i}$ depends only on $N$.

Now we distinguish the cases $N\geq 6$, $N=5$, $N=4$ and $N=3$ as
follows:

\vspace{0.3cm}

\noindent \textit{Case 1}. $N\geq 6$

It is not difficult to see that for $N\geq 6 $ and $q>2$ all
integrals in (\ref{aqui}) are convergent as $\varep\rightarrow 0$.
Besides we also have
$-\frac{(N-2)}{4}q+\frac{N}{2}-1>\frac{4-N}{2}$ for $2<q<2^*$,
then $I_{\varep}\rightarrow -\infty$ as $\varep\rightarrow 0$,
proving (\ref{star2}).

\vspace{0.3cm}

\noindent \textit{Case 2}. $N=5$

As in \textit{Case 1} all integrals in (\ref{aqui}) are convergent
as $\varep\rightarrow 0$  for $N=5 $ and $2<q<2^*$.

There are two cases to be considered: either $2<q<\frac{8}{3}$ or
$\frac{8}{3}\leq q<2^*$. For $2<q<\frac{8}{3}$ we immediately see
that $-\frac{(N-2)}{4}q+\frac{N}{2}-1>\frac{4-N}{2}$ and for
$\frac{8}{3}\leq q<2^*$ we choose $\mu=e^{1/\varep}$. So in both
cases we get $I_{\varep}\rightarrow -\infty$ as $\varep\rightarrow
0$.

\vspace{0.3cm}

\noindent \textit{Case 2}. $N=4 $

Using the fact that $q<2^{*}=4$ and by computing
\begin{eqnarray*}\int_{0}^{\frac{R}{\sqrt{\varep}}}\frac{r^{3}}{(1+r^{2})^{2}}
dr=\frac{1}{2}\Big(\log(1+\frac{R^{2}}{\varep})+\frac{\varep}{\varep+R^{2}}-1\Big)
\end{eqnarray*}
and
\begin{eqnarray*}
\int_{0}^{\frac{R}{\sqrt{\varep}}}\frac{r^{3}}{(1+r^{2})^{4}}
dr=\frac{1}{12}-\frac{\varep^{2}(\varep+3R^{2})}{12(\varep+R^{2})^{3}}
\end{eqnarray*}
we get
\begin{eqnarray*}I_{\varep}\leq &&
\hspace{-0.5cm}\frac{C_{1}}{2}\Big(\log(1+\frac{R^{2}}{\varep})+\frac{\varep}{\varep+R^{2}}-1\Big)-\mu
C_{2}\varep^{\frac{2-q}{2}}\Big(\frac{1}{12}-\frac{\varep^{2}(\varep+3R^{2})}{12(\varep+R^{2})^{3}}\Big)+\\
&&\hspace{-0.5cm}+C_{3}\Big(\int_{0}^{\frac{R}{\sqrt{\varep}}}\frac{r^3}{(1+r^2)^{8/3}}
\Big)^{3/2}
\end{eqnarray*}

But since
\begin{eqnarray*}\lim_{\varep\rightarrow
0}\frac{\varep^{\frac{2-q}{2}}}{\log(1+\frac{R^{2}}{\varep})}=+\infty
\end{eqnarray*}
we conclude that $I_{\varep}\rightarrow -\infty$ as
$\varep\rightarrow 0$.

\vspace{0.3cm}

\noindent \textit{Case 3}. $N=3 $

By simple computations, one gets
\begin{eqnarray*}\int_{0}^{\frac{R}{\sqrt{\varep}}}\frac{r^{2}}{1+r^{2}}
dr=\frac{R}{\sqrt{\varep}}-\arctan(\frac{R}{\sqrt{\varep}})
\end{eqnarray*}
then, arguing as in the proof of the case $N=4$,
\begin{eqnarray*}I_{\varep}&\leq &
C_{1}R-C_{1}\varep^{1/2}\arctan(\frac{R}{\sqrt{\varep}}) -\mu
C_{2}\varep^{\frac{4-q}{4}}\int_{0}^{\frac{R}{\sqrt{\varep}}}\frac{r^{2}}{(1+r^{2})^{q/2}}
dr +\\
&&+C_{3}\varep\Big(\int_{0}^{\frac{R}{\sqrt{\varep}}}\frac{r^{2}}{(1+r^{2})^{6/5}}
dr  \Big)^{5/3}\\
&\leq & C_{1}R-\mu
C_{2}\varep^{\frac{4-q}{4}}\int_{0}^{\frac{R}{\sqrt{\varep}}}\frac{r^{2}}{(1+r^{2})^{q/2}}
dr+C_{3}R^{5/3}\varep^{1/6}
\end{eqnarray*}

We have to distinguish two cases: either $2<q\leq 4$ or $4<q<2^*$.

The case $4<q<2^*$ was proved by Cassani \cite{Cassani}. However,
we can also show (\ref{star2}) using the last inequality, since
the integral
$\textstyle\int_{0}^{\infty}\frac{r^{2}}{(1+r^{2})^{q/2}} dr$
converges.

If $2<q\leq 4$ and noting that
$\textstyle\int_{0}^{\infty}\frac{r^{2}}{(1+r^{2})^{q/2}} dr\geq
\frac{\pi}{4}$ we conclude
\begin{eqnarray*}I_{\varep}\leq C_{4}-\frac{\pi}{4}\mu
C_{2}\varep^{\frac{4-q}{4}}
\end{eqnarray*}
Finally, choosing $\mu=\varep^{-\frac{1}{2}}$, we infer that
$I_{\varep}\rightarrow -\infty$ as $\varep\rightarrow 0$.

Hence this proves (\ref{star2}).

\vspace{0.3cm}

\noindent \textit{Verification of (\ref{star22})}. We have
\begin{eqnarray*}&&\hspace{-1cm}\frac{1}{\varep^\delta}\Big[\int_{B_{2R}\setminus B_{R}}(v_{\varepsilon}^{2}dx-\mu
v_{\varepsilon}^{q})dx +\Big(\int_{B_{2R}\setminus
B_{R}}|v_{\varepsilon}|^{2\cdot 2^{*}/(2^{*}-1)}dx \Big)^{2\cdot
(2^{*}-1)/2^{*}}\Big] \leq \\
&\leq & \frac{C_{1}}{\varep^{\delta}}\int_{B_{2R}\setminus
B_{R}}\varphi^{2}u_{\varep}^{2}dx+\frac{C_{3}}{\varep^{\delta}}\Big(\int_{B_{2R}\setminus
B_{R}}\varphi^{2\cdot 2^{*}/(2^{*}-1)}|u_{\varepsilon}|^{2\cdot
2^{*}/(2^{*}-1)} dx\Big)^{2\cdot
(2^{*}-1)/2^{*}}\\
&\leq & C_{1}\varep \|\varphi\|^{2}_{H^{1}(B_{2R}\setminus B_{R})}
+ C_{2}\varep^{2+\delta}\|\varphi^{2^{*}/(2^{*}-1)} \|^{2\cdot
(2^{*}-1)/2^{*}}_{H^{1}(B_{2R}\setminus B_{R})}
\end{eqnarray*}
where we choose R large such that $u_{\varep}^{2}\leq
\varep^{1+\delta}$, $\forall |x|\geq\delta$. Then we conclude that
equation (\ref{star22}) is bounded.

Consequently, the proof of Claim 2 is complete.
\hspace{4.8cm}$\Box$

%&&&&&&&&&&&&&&&&&&&&&&&&&&&&&&&&&&&&&&&&&&&&&&&&&&&&&&&&&&&&&&&&&&&&&&&&&&&&&&&&&&&&&&&&&&&

%\noindent{\bf Acknowledgments}\\

%\section{Concluding Remarks}

\end{document}